\newcommand{\pr}{\mathrm{pr}}
\newcommand{\C}{\mathcal C}
\newcommand{\B}{\mathcal B}
\newcommand{\N}{\mathbb N}
\newcommand{\R}{\mathbb R}
\newcommand{\F}{\mathcal F}
\newtheorem{theorem}{Theorem}
\newtheorem{lemma}{Lemma}
\newtheorem{corollary}{Corollary}
\begin{document}

\title{Games in possibility capacities with  payoff expressed by fuzzy integral}

\author{Taras Radul}

\maketitle

Institute of Mathematics, Casimirus the Great University of Bydgoszcz, Poland;
\newline
Department of Mechanics and Mathematics, Ivan Franko National University of Lviv,
Universytetska st., 1. 79000 Lviv, Ukraine.
\newline
e-mail: tarasradul@yahoo.co.uk

\textbf{Key words and phrases:}  Non-additive measures, equilibrium under uncertainty, possibility capacity, necessity capacity, fuzzy integral, non-linear convexity, triangular norm.

\subjclass[MSC 2010]{28E10,91A10,52A01,54H25}

\begin{abstract} This paper studies non-cooperative games where players are allowed  to play their mixed non-additive strategies.    Expected payoffs are expressed by so-called fuzzy integrals: Choquet
integral, Sugeno integral and generalizations of Sugeno integral obtained by using triangular norms.  We consider the existence problem of Nash equilibrium for such games. Positive results for Sugeno integral and its generalizations are obtained. However we provide some example of a game with Choquet payoffs which have no Nash equilibrium. Such example demonstrates that fuzzy integrals based on the maximum operation are more suitable for possibility capacities then  Choquet integral which is based on the addition operation.
\end{abstract}


\section{Introduction}

The classical Nash equilibrium theory is based on fixed point theory and was developed in frames of linear convexity. The mixed strategies of a player are probability (additive) measures on a set of pure strategies. But an interest to Nash equilibria in more general frames is rapidly growing in last decades. For instance,
Aliprantis, Florenzano and Tourky \cite{AFT} work in ordered topological vector spaces, Luo \cite{L} in topological semilattices,
Vives \cite{Vi} in complete lattices.  Briec and Horvath \cite{Ch} proved  existence of Nash equilibrium point for idempotent convexity.

We can use additive measures only when we know precisely probabilities of all events considered in a game. However, it is not a case
in many modern economic models. The decision theory under uncertainty considers a model when probabilities of states are either not known or imprecisely specified. Gilboa \cite{Gil} and Schmeidler  \cite{Sch} axiomatized  expectations expressed by Choquet
integrals attached to non-additive measures called capacities (fuzzy measures), as a formal approach to decision-making under uncertainty.

Dow and Werlang \cite{DW} used this approach for two players game where belief of each player about a choice of the strategy by the other player is a convex capacity, but the players play with pure strategies. They introduced some equilibrium notion for such games and proved its existence.  This result was extended onto games with arbitrary finite number of players in \cite{EK}. Another interesting approach to the games in convex capacities with pay-off functions expressed by Choquet integrals can be find in \cite{Ma}. The authors considered finite sets of pure strategies in the above mentioned papers.

An alternative to so-called Choquet expected utility model is the qualitative decision theory.   The corresponding expected utility is expressed by Sugeno integral. This approach was widely studied in the last decade  (\cite{DP},\cite{DP1},\cite{CH1},\cite{CH}).  Sugeno integral chooses a median value of utilities which is qualitative counterpart of the averaging operation by Choquet integral.

The equilibrium notion from \cite{DW} and \cite{EK} for a game  with expected payoff function defined by Sugeno integral was considered in \cite{R4}. The sets of pure strategies are arbitrarily compacta.  Let us remark that in \cite{DW} and \cite{EK} attention was restricted to convex capacities which play an important role in Choquet expected utility theory. There are two important classes of capacities in the qualitative decision theory, namely  possibility and necessity capacities which describe optimistic and pessimistic criteria \cite{DP}. The existence of equilibrium expressed by possibility (or necessity) capacities is proved in  \cite{R4}. Since the spaces of possibility and  necessity capacities have no natural linear convex structure, some non-linear convexity is used.

Kozhan and Zarichnyi  \cite{KZ} and Glycopantis and Muir \cite{GM} considered games with Choquet payoff where players are allowed to form non-additive beliefs about opponent's decision but also to play their mixed non-additive strategies expressed by capacities.  The same approach for games with Sugeno payoff was considered in \cite{R3}. Games with strategies expressed by possibility capacities were recently considered by Hosni and Marchioni \cite{HM}. They considered payoff functions represented by Choquet integral and Sugeno integral.   Let us remark that when we consider the space of all capacities which has the greatest and the smallest elements (or the space of possibility capacities which has the greatest elements), then the existence  problem of Nash equilibrium is rather trivial. But the set of possibility capacities has no smallest element and  the set of necessity capacities has no greatest element. So, if we consider a game  where  the players play their mixed  strategies expressed by possibility capacities and the goal of each player is to minimize his expected payoff function,  existence of Nash equilibrium is not trivial for such games. (Dually, it is possible to consider games in necessity capacities and the goal to maximize expected payoff function.) We will prove existence of Nash equilibrium for games with expected payoff functions represented by fuzzy integral generated by the maximum operation and some continuous triangular norm (a partial case is the  Sugeno integral which is generated by the maximum and the minimum operations). We also provide an example of a game in possibility capacities with minimizing of  expected payoff functions represented by  Choquet integral which has no Nash equilibrium. This example demonstrates that the Choquet integral is not so suitable for  possibility capacities as it is for convex capacities (see for example \cite{DW}, \cite{EK} and \cite{Ma}).

\section{Capacities and fuzzy integrals} In what follows, all spaces are assumed to be compacta (compact Hausdorff space) except for $\R$ and all maps are assumed to be continuous. By $\F(X)$ we denote the family of all closed subsets of a compactum $X$. We shall denote the
Banach space of continuous functions on a compactum  $X$ endowed with the sup-norm by $C(X)$. For any $c\in\ R$ we shall denote the
constant function on $X$ taking the value $c$ by $c_X$. We also consider natural lattice operations $\vee$ and $\wedge$ on $C(X)$ and  its sublattices $C(X,[0,+\infty))$ and $C(X,[0,1])$.

We need the definition of capacity on a compactum $X$. We follow a terminology of \cite{NZ}.
A function $\nu:\F(X)\to [0,1]$  is called an {\it upper-semicontinuous capacity} on $X$ if the three following properties hold for each closed subsets $F$ and $G$ of $X$:

1. $\nu(X)=1$, $\nu(\emptyset)=0$,

2. if $F\subset G$, then $\nu(F)\le \nu(G)$,

3. if $\nu(F)<a$, then there exists an open set $O\supset F$ such that $\nu(B)<a$ for each compactum $B\subset O$.

If $F$ is a one-point set we use a simpler notation $\nu(a)$ instead $\nu(\{a\})$.
A capacity $\nu$ is extended in \cite{NZ} to all open subsets $U\subset X$ by the formula $\nu(U)=\sup\{\nu(K)\mid K$ is a closed subset of $X$ such that $K\subset U\}$.

It was proved in \cite{NZ} that the space $MX$ of all upper-semicontinuous  capacities on a compactum $X$ is a compactum as well, if a topology on $MX$ is defined by a subbase that consists of all sets of the form $O_-(F,a)=\{c\in MX\mid c(F)<a\}$, where $F$ is a closed subset of $X$, $a\in [0,1]$, and $O_+(U,a)=\{c\in MX\mid c(U)>a\}$, where $U$ is an open subset of $X$, $a\in [0,1]$. Since all capacities we consider here are upper-semicontinuous, in the following we call elements of $MX$ simply capacities.

A capacity $\nu\in MX$ for a compactum $X$ is called  a necessity (possibility) capacity if for each family $\{A_t\}_{t\in T}$ of closed subsets of $X$ (such that $\bigcup_{t\in T}A_t$ is a closed subset of $X$) we have $\nu(\bigcap_{t\in T}A_t)=\inf_{t\in T}\nu(A_t)$  ($\nu(\bigcup_{t\in T}A_t)=\sup_{t\in T}\nu(A_t)$). (See \cite{WK} for more details.) We denote by $M_\cap X$ ($M_\cup X$) a subspace of $MX$ consisting of all necessity (possibility) capacities. Since $X$ is compact and $\nu$ is upper-semicontinuous, $\nu\in M_\cap X$ iff $\nu$ satisfy the simpler requirement that $\nu(A\cap B)=\min\{\nu(A),\nu(B)\}$.

If $\nu$ is a capacity on a compactum $X$, then  the function $\kappa X(\nu)$, that is defined on the family $\F(X)$  by the formula $\kappa X(\nu)(F) = 1-\nu (X\setminus F)$, is a capacity as well. It is called the dual
capacity (or conjugate capacity ) to $\nu$. The mapping $\kappa X : MX \to MX$ is a homeomorphism and an involution \cite{NZ}. Moreover, $\nu$ is a necessity capacity if and only if $\kappa X(\nu)$ is a possibility capacity. This implies in particular that $\nu\in M_\cup X$ iff $\nu$ satisfy the simpler requirement that $\nu(A\cup B)=\max\{\nu(A),\nu(B)\}$. It is easy to check that $M_\cap X$ and $M_\cup X$ are closed subsets of $MX$.

The notion of density for an idempotent measure was introduced in \cite{A}.
For each possibility capacity $\nu\in M_\cup X$ we consider an upper semicontinuous function $[\nu]:X\to [0,1]$ that
sends each $x\in X$ to $\nu(x)$ and is called the density of $\nu$.
Observe that for a possibility capacity $\nu\in M_\cup X$ and a closed set $F\subset X$ we have $\nu(F) =
\max\{\nu(x) | x \in F\}$, and $\nu$ is completely determined by its values on singletons. It means that $\nu$ is completely determined by the function $[\nu]$.
 Conversely, each upper semicontinuous function $f:X\to I$ with $\max f = 1$ determines a possibility capacity $(f)\in M_\cup X$
by the formula $(f)(F) = \max\{f(x) | x \in F\}$, for a closed subset $F$ of $X$. It is easy to check that $([\nu])=\nu$ for each $\nu\in M_\cup X$ and $[(f)] = f$ for each upper semicontinuous function $f:X\to I$ with $\max f = 1$.

Denote $\varphi_t=\varphi^{-1}([t,+\infty))$ for each $\varphi\in C(X,[0,+\infty))$ and $t\in[0,+\infty)$.
Let us remind definitions of  the Choquet integral and the Sugeno integral with respect to a capacity $\mu\in MX$.  We consider for a compactum $X$ and  for a  function $f\in  C(X,[0,+\infty))$ an integral defined by the formula
$\int_X^{Ch} fd\mu=\int_0^\infty\mu(f_t)dt$ \cite{Cho}  and call it the Choquet integral.

For a  function $f\in  C(X,[0,1])$ we consider an integral defined by the formula $\int_X^{Sug} fd\mu=\max\{\mu(f_t)\wedge t\mid t\in[0,1]\}$ \cite{Su}  and call it the Sugeno integral. The existence of maximum follows from the semicontinuity of the capacity $\mu$.

Let us remark that the operation of minimum $\wedge$ is an important example of triangular norm (t-norm). Remind that triangular norm $\ast$ is a binary operation on the closed unit interval $[0,1]$ which is associative, commutative, monotone and $s\ast 1=s$ for each  $s\in [0,1]$ \cite{PRP}. We consider only continuous t-norms in this paper. Integrals obtained by changing the operation  $\wedge$ in the definition of Sugeno integral by any t-norm are called t-normed integrals and were studied in \cite{We1}, \cite{We2} and \cite{Sua}. So, for a continuous t-norm $\ast$ and a  function $f\in  C(X,[0,1])$ the corresponding t-normed integral is defined by the formula
$\int_X^{\vee\ast} fd\mu=\max\{\mu(f_t)\ast t\mid t\in[0,1]\}$.

Let $X$ be a compactum.  We call two functions $\varphi$, $\psi\in C(X,[0,1])$ comonotone (or equiordered) if $(\varphi(x_1)-\varphi(x_2))\cdot(\psi(x_1)-\psi(x_2))\ge 0$ for each $x_1$, $x_2\in X$. Let us remark that a constant function is comonotone to any function $\psi\in C(X,[0,1])$.

\begin{lemma}\label{Comon} Let  $\varphi$, $\psi\in C(X,[0,1])$ be two comonotone functions. Then we have $\varphi_t\subset\psi_t$ or $\varphi_t\supset\psi_t$ for each $t\in[0,1]$.
\end{lemma}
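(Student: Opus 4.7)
The plan is to argue by contradiction. Suppose there is some $t\in[0,1]$ for which neither $\varphi_t\subset\psi_t$ nor $\varphi_t\supset\psi_t$ holds. Then the symmetric differences are both nonempty, so one can choose witnesses $x_1\in\varphi_t\setminus\psi_t$ and $x_2\in\psi_t\setminus\varphi_t$.

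Unpacking the definition $\varphi_t=\varphi^{-1}([t,+\infty))$, these choices translate into the four scalar inequalities $\varphi(x_1)\ge t$, $\varphi(x_2)<t$, $\psi(x_2)\ge t$, $\psi(x_1)<t$. From the first two we get $\varphi(x_1)-\varphi(x_2)>0$, and from the last two we get $\psi(x_1)-\psi(x_2)<0$. Multiplying these strict inequalities yields $(\varphi(x_1)-\varphi(x_2))(\psi(x_1)-\psi(x_2))<0$, which directly contradicts the comonotonicity assumption that this product is always nonnegative.

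There is no real obstacle here: the statement is essentially a reformulation of the definition of comonotonicity in terms of superlevel sets, and the whole argument is a one-step contrapositive. The only thing to be mildly careful about is the strictness of the inequalities (using $\ge t$ on one side and $<t$ on the other), which is exactly what guarantees that the product of differences is strictly negative rather than merely nonpositive.
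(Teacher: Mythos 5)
Your proof is correct and is essentially the same argument as the paper's: the paper also argues by contradiction, picks $x\in\varphi_t\setminus\psi_t$ and $y\in\psi_t\setminus\varphi_t$, and concludes $(\varphi(x)-\varphi(y))(\psi(x)-\psi(y))<0$. You merely spell out the four scalar inequalities that the paper leaves implicit.
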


\begin{proof} Suppose the contrary. Then there exists $t\in[0,1]$, $x\in \varphi_t\setminus\psi_t$ and $y\in\psi_t\setminus\varphi_t$. Then we have $(\varphi(x)-\varphi(y))\cdot(\psi(x)-\psi(y))<0$.
\end{proof}

For $A\in\F(X)$ put $\Upsilon_A=\{\varphi\in C(X,[0,1])\mid \varphi(a)=1$ for each $a\in A\}$. If $A=\emptyset$ we put $\Upsilon_A=C(X,[0,1])$.

\begin{lemma}\label{Comon1} Let  $\varphi\in C(X,[0,1])$, $t<\max \varphi$ and $\psi\in\Upsilon_{\varphi_t}$. Then there exists $\psi'\in\Upsilon_{\varphi_t}$ such that $\psi'\le\psi$ and $\psi'$ is comonotone with $\varphi$.
\end{lemma}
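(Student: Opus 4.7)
I would construct $\psi'$ of the form $f\circ\varphi$ for a suitable continuous non-decreasing $f:[0,\max\varphi]\to[0,1]$. The auxiliary object is $p(v)=\min\{\psi(y):\varphi(y)\ge v\}$, well-defined since $\{\varphi\ge v\}$ is closed in the compactum $X$ and $\psi$ is continuous. On $f$ I would impose $f\equiv 1$ on $[t,\max\varphi]$ and $f\le p$ everywhere. Granting such an $f$, the composition $\psi'=f\circ\varphi$ is continuous, equals $1$ on $\varphi_t$, lies below $\psi$ because $f(\varphi(x))\le p(\varphi(x))\le\psi(x)$ (using $y=x$ in the defining minimum), and is comonotone with $\varphi$ because $f$ is non-decreasing, so $\varphi(x_1)\ge\varphi(x_2)$ forces $\psi'(x_1)\ge\psi'(x_2)$.

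First I would record three properties of $p$. It is non-decreasing, it equals $1$ on $[t,\max\varphi]$ (since $\{\varphi\ge v\}\subseteq\varphi_t$ and $\psi=1$ on $\varphi_t$), and it is left-continuous on $[0,\max\varphi]$. Left-continuity is a standard compactness argument: for $v_n\uparrow v_0$, pick minimizers $y_n\in\{\varphi\ge v_n\}$ and pass to a convergent subsequence $y_n\to y^*\in\{\varphi\ge v_0\}$; this yields $\lim p(v_n)=\psi(y^*)\ge p(v_0)$, while the reverse inequality is automatic by monotonicity. In particular $p(v)\to 1$ as $v\uparrow t$, since $t<\max\varphi$ guarantees $t\in[0,\max\varphi]$.

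To construct $f$, I would use left-continuity of $p$ at $t$ to choose a strictly decreasing sequence $\delta_1>\delta_2>\cdots>0$ with $\delta_n\to 0$ such that $p(v)>1-1/(n+1)$ for every $v\in(t-\delta_n,t]$. Then I would set $f\equiv 0$ on $[0,t-\delta_1]$, take $f$ on $[t-\delta_1,t]$ to be the piecewise linear interpolant through the corner points $(t-\delta_n,1-1/n)$ for $n=1,2,\ldots$ (with $f(t)=1$ as the limit of the corner values), and set $f\equiv 1$ on $[t,\max\varphi]$. The resulting $f$ is continuous and non-decreasing; on each subinterval $(t-\delta_n,t-\delta_{n+1}]$ its values are bounded above by the right-endpoint value $1-1/(n+1)$, which is strictly smaller than $p(v)$ by the choice of $\delta_n$, so $f\le p$ everywhere.

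The main technical obstacle is the continuity of $\psi'$. The naive candidate $\psi^*(x)=\min\{\psi(y):\varphi(y)\ge\varphi(x)\}$ is comonotone with $\varphi$ and $\le\psi$, but typically only lower semi-continuous: it can jump where $\varphi$ has a plateau bordering a region of strict increase. Ramping $f$ slowly from $0$ to $1$ at a rate dictated by the modulus of left-continuity of $p$ at $t$ circumvents this, since the condition $p(v)\to 1$ as $v\uparrow t$ is precisely what leaves enough room for $f$ to reach $f(t)=1$ continuously while remaining under $p$.
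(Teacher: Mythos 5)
Your proof is correct and takes essentially the same route as the paper's: both construct $\psi'=f\circ\varphi$ for a continuous non-decreasing $f$ that climbs to $1$ at $t$ slowly enough to stay below $\psi$, and your envelope $p(v)=\min\{\psi(y)\mid\varphi(y)\ge v\}$ is just the inverse-side description of the paper's thresholds $a_i=\max\{\varphi(x)\mid\psi(x)\le d_i\}$. The only point to polish is the left-continuity argument: $X$ is merely compact Hausdorff, so instead of extracting a convergent subsequence of minimizers (which need not exist without metrizability), note that the nested nonempty closed sets $\{\varphi\ge v_n\}\cap\{\psi\le\sup_m p(v_m)\}$ have a common point by compactness, which yields the same conclusion.
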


\begin{proof} If $\psi=1_X$ we can put $\psi'=\psi$. So, consider the case when $1>d_0=\min\psi$. Choose a sequence $(d_i)$ converging to $1$ and such that $d_0<d_1<d_2<\dots 1$. Put $a_i=\max\{\varphi(x)\mid\psi(x)\le d_i\}$. Evidently we have $a_i<t$.

Consider the case when $a_i\rightarrow a<t$. Choose a monotone homeomorphism $\alpha:[a,t]\to [d_0,1]$ and put
$$ \psi'(x)=\begin{cases}
d_0,&\varphi(x)\le a,\\
\alpha(\varphi(x)),&a\le\varphi(x)\le t,\\
1,&t\le\varphi(x).\end{cases}$$

In the case when $a_i\rightarrow t$ we can assume $a_0<a_1<a_2<\dots$. For each $i\ge 1$ choose a monotone homeomorphism $\alpha_i:[a_i,a_{i+1}]\to [d_{i-1},d_i]$ and put
$$ \psi'(x)=\begin{cases}
d_0,&\varphi(x)\le a_1,\\
\alpha_i(\varphi(x)),&a_i\le\varphi(x)\le a_{i+1},\\
1,&t\le\varphi(x).\end{cases}$$

It is a routine check that $\psi'$ is a function we are looking for.
\end{proof}

Consider a characterization theorem of Sugeno integral for functions and capacities on finite $X$ proved in \cite{CB}.
It is proved in \cite{CB} that for a finite compactum $X$ a non-negative  functional $\mu$ on  $C(X,[0,1])=[0,1]^X$ satisfies the conditions:

\begin{enumerate}
\item  $\mu(1_X)=1$;
\item  $\mu(\psi)\le\mu(\varphi)$ for each functions $\varphi$, $\psi\in C(X,[0,1])$ such that $\varphi\le\psi$;
\item  $\mu(\psi\vee\varphi)=\mu(\psi)\vee\mu(\varphi)$ for each comonotone functions $\varphi$, $\psi\in C(X,[0,1])$;
\end{enumerate}

(4)$^\wedge$  $\mu(c_X\wedge\varphi)=c\wedge\mu(\varphi)$ for each $c\in [0,1]$, $\varphi\in C(X,[0,1])$,

if and only if there exists a unique capacity $\nu$ such that   $\mu$ is the Sugeno integral with respect
to $\nu$.

The analogous characterization theorem was  proved  in \cite{CLM} for each t-normed integral on a finite compactum. The authors also posed the problem to extend above mentioned results to any (infinite) compacta. We will consider such generalization in this section.

We consider any compactum $X$ and a continuous t-norm $\ast$.

\begin{lemma}\label{Bound} We have $\min_{x\in X}f(x)\le\int_X^{\vee\ast} fd\nu\le\max_{x\in X}f(x)$ for each capacity $\nu\in MX$ and $f\in  C(X,[0,1])$.
\end{lemma}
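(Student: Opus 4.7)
The statement splits into two one-sided bounds, and each follows almost immediately from the definition of the t-normed integral together with the basic axioms of a t-norm (boundary condition $s\ast 1=s$ and monotonicity in each argument). So the plan is essentially to pick out the right value of $t\in[0,1]$ in each case.

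For the lower bound, set $m=\min_{x\in X}f(x)$. Then every point of $X$ lies in $f_m=f^{-1}([m,+\infty))$, so $f_m=X$ and therefore $\nu(f_m)=\nu(X)=1$. Using the normalization $1\ast m=m$ we obtain $\nu(f_m)\ast m=m$, which is one of the elements of the set whose maximum defines $\int_X^{\vee\ast}fd\nu$. Hence $\int_X^{\vee\ast}fd\nu\ge m$.

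For the upper bound, set $M=\max_{x\in X}f(x)$ and fix $t\in[0,1]$. If $t\le M$, then by monotonicity of $\ast$ and the fact that $\nu(f_t)\le 1$ we get $\nu(f_t)\ast t\le 1\ast t=t\le M$. If $t>M$, then $f_t=\emptyset$, so $\nu(f_t)=0$, and monotonicity combined with $0\ast 1=0$ (which is itself a consequence of $s\ast 1=s$) forces $\nu(f_t)\ast t\le 0\ast 1=0\le M$. Taking the maximum over $t$ yields $\int_X^{\vee\ast}fd\nu\le M$.

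There is no real obstacle; the only point that requires a brief remark is the boundary value $0\ast t=0$, which is not listed among the axioms but is immediate from monotonicity together with $0\ast 1=0$. The existence of the maximum in the definition of $\int_X^{\vee\ast}fd\nu$ has already been justified in the text by upper semicontinuity of $\nu$, so no additional compactness argument is needed.
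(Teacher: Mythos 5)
Your proof is correct and follows essentially the same route as the paper: the lower bound comes from the single level $t=\min f$ where $f_t=X$ and $\nu(f_t)\ast t=1\ast t=t$, and the upper bound from monotonicity of $\ast$ together with the observation that levels above $\max f$ contribute nothing. Your explicit remark that $0\ast t=0$ follows from $0\ast 1=0$ and monotonicity is a slightly more careful justification of a step the paper handles implicitly by restricting the maximum to $t\in[0,\max f]$.
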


\begin{proof} Put $a=\min_{x\in X}f(x)$ and  $b=\max_{x\in X}f(x)$. The we have  $$a=1\ast a=\nu(f_a)\ast a\le\max\{\nu(f_t)\ast t\mid t\in[0,1]\}=$$
$$=\max\{\nu(f_t)\ast t\mid t\in[0,b]\}\le1\ast b=b.$$
\end{proof}

We denote by $\B$ the set of functionals $\mu:C(X,[0,1])\to[0,1]$ which satisfy the conditions:

\begin{enumerate}
\item $\mu(1_X)=1$;
\item $\mu(\varphi)\le\mu(\psi)$ for each functions $\varphi$, $\psi\in C(X,[0,1])$ such that $\varphi\le\psi$;
\item $\mu(\psi\vee\varphi)=\mu(\psi)\vee\mu(\varphi)$ for each comonotone functions $\varphi$, $\psi\in C(X,[0,1])$;
\item $\mu(c_X\ast\varphi)=c\ast\mu(\varphi)$ for each $c\in\R$ and $\varphi\in C(X,[0,1])$.

\end{enumerate}



Let us remark that for each $c\in[0,1]$ and for each $\mu\in\B$ the equality $\mu(c_X)=c$ follows from Properties 1 and 4.

\begin{theorem}\label{repr} A functional $\mu:C(X,[0,1])\to[0,1]$ is in $\B$
if and only if there exists a unique capacity $\nu$ such that   $\mu$ is the t-normed integral with respect
to $\nu$.
\end{theorem}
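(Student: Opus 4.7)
The proof divides into the two implications. For $(\Leftarrow)$, I would verify properties (1)--(4) directly for $\mu=\int_X^{\vee\ast}(\cdot)\,d\nu$. Properties (1) and (2) are immediate from $\nu(X)=1$, $\nu(\emptyset)=0$, and the monotonicity of $\nu$ and $\ast$. Property (3) follows from Lemma \ref{Comon}: for comonotone $\varphi,\psi$ the level sets $\varphi_t,\psi_t$ are nested, so $(\varphi\vee\psi)_t$ coincides with the larger of the two, giving $\nu((\varphi\vee\psi)_t)=\nu(\varphi_t)\vee\nu(\psi_t)$, whence the maximum over $t$ distributes. Property (4) follows from continuity of $\ast$: writing $(c_X\ast\varphi)_t=\varphi_{s(t)}$ with $s(t)=\min\{u:c\ast u\ge t\}$, the substitution $t=c\ast s$ together with the commutativity and associativity of $\ast$ gives $\int_X^{\vee\ast}(c_X\ast\varphi)\,d\nu=c\ast\int_X^{\vee\ast}\varphi\,d\nu$.

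For the converse, I define $\nu(F)=\inf\{\mu(\varphi):\varphi\in\Upsilon_F\}$ for $F\in\F(X)$. The capacity axioms are routine: $\nu(X)=1$ by (1); $\nu(\emptyset)=\mu(0_X)=0\ast 1=0$ by (4) applied to $c=0$, $\varphi=1_X$; monotonicity from $F\subset G\Rightarrow\Upsilon_G\subset\Upsilon_F$; and upper semicontinuity by choosing $\varphi\in\Upsilon_F$ with $\mu(\varphi)<a$, taking $O=\varphi^{-1}((1-\delta,1])$ as the required open neighborhood, and for each compact $B\subset O$ modifying $\varphi$ via Urysohn inside $O$ to produce a witness $\varphi'\le\varphi$ in $\Upsilon_B$ with $\mu(\varphi')\le\mu(\varphi)<a$. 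The heart of the proof is the identity $\mu(\varphi)=\int_X^{\vee\ast}\varphi\,d\nu$. For the $\ge$ direction, fix $t<\max\varphi$ and, by Urysohn, pick $\psi\in\Upsilon_{\varphi_t}$ vanishing outside a small neighborhood of $\varphi_t$. Lemma \ref{Comon1} then yields $\psi'\le\psi$ in $\Upsilon_{\varphi_t}$, comonotone with $\varphi$, with $\min\psi'=0$. A further reparametrization of $\psi'$ via the residuum of $\ast$ (which interpolates continuously to $1$ at the boundary $\{\varphi=t\}$ since $\ast$ is continuous and $s\ast 1=s$) produces $\psi''\le\psi$ with the same membership and comonotonicity and with $t_X\ast\psi''\le\varphi$ pointwise. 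Then (4) and monotonicity (2) give $\mu(\varphi)\ge\mu(t_X\ast\psi'')=t\ast\mu(\psi'')$; passing to the infimum in $\psi$ and the maximum in $t$ yields $\mu(\varphi)\ge\int_X^{\vee\ast}\varphi\,d\nu$.

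For the $\le$ direction, I would argue by contradiction: if $\mu(\varphi)>s>\int_X^{\vee\ast}\varphi\,d\nu$, then for every $t\in[0,\max\varphi]$ one has $\nu(\varphi_t)\ast t<s$, so there exists $\psi_t\in\Upsilon_{\varphi_t}$ with $\mu(\psi_t)\ast t<s$; splicing these witnesses across $t$ by means of Lemma \ref{Comon1} and compactness of $[0,\max\varphi]$ produces $\tilde\psi\ge\varphi$ with $\mu(\tilde\psi)\le s$, contradicting monotonicity. Uniqueness follows by noting that if also $\mu=\int_X^{\vee\ast}(\cdot)\,d\nu'$, then for $\psi\in\Upsilon_F$ we have $\mu(\psi)\ge\nu'(\psi_1)\ge\nu'(F)$, so $\nu(F)\ge\nu'(F)$, while for $\psi$ a Urysohn function in $\Upsilon_F$ supported in $O\supset F$ with $\nu'(B)<\nu'(F)+\varepsilon$ for compact $B\subset O$, upper semicontinuity of $\nu'$ gives $\int_X^{\vee\ast}\psi\,d\nu'\le\nu'(F)+\varepsilon$, yielding $\nu(F)\le\nu'(F)$. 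The main obstacle is the construction of $\psi''$ in the $\ge$ direction, which must simultaneously lie in $\Upsilon_{\varphi_t}$, be comonotone with $\varphi$, remain continuous on all of $X$, and satisfy $t_X\ast\psi''\le\varphi$ --- a combination that neither Lemma \ref{Comon1} alone nor the naive residuum of $\ast$ provides without careful interpolation near the level set $\{\varphi=t\}$. The same delicacy reappears in the splicing argument for the $\le$ direction, which must preserve comonotonicity while combining scalings across different values of $t$.
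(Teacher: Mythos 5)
Your overall architecture matches the paper's: the sufficiency direction is verified property by property (Lemma \ref{Comon} for comonotone maxitivity, continuity of $\ast$ for Property 4), and the necessity direction defines $\nu(A)=\inf\{\mu(\varphi)\mid\varphi\in\Upsilon_A\}$ and proves $\mu=\int_X^{\vee\ast}(\cdot)\,d\nu$ in two inequalities, the harder one by a finite-subcover/comonotone-splicing argument. The explicit uniqueness argument you add is a useful supplement, since the paper leaves that point implicit.

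There is, however, a genuine gap in your $\ge$ direction, precisely at the point you yourself flag as ``the main obstacle''. A continuous $\psi''\in\Upsilon_{\varphi_t}$ with $t_X\ast\psi''\le\varphi$ pointwise does not exist in general, and the residuum of $\ast$ does not repair this: for $\ast=\wedge$ (the Sugeno case, the most important instance) the condition forces $\psi''\le\varphi<t$ on $\{\varphi<t\}$ while $\psi''\equiv 1$ on $\{\varphi\ge t\}$, so $\psi''$ must jump at any boundary point of $\varphi_t$ approached from $\{\varphi<t\}$ (take $X=[0,1]$, $\varphi=\mathrm{id}$, $t=1/2$). The residuum of a continuous t-norm is in general not continuous, so ``interpolates continuously to $1$ at the boundary'' fails exactly where you need it. The paper avoids the single-witness construction altogether: for each $\delta<t$ it takes $\chi^\delta\in\Upsilon_{\varphi_t}$ agreeing with $\varphi$ on $\varphi^{-1}([0,\delta])$, so that only the weaker inequality $\delta_X\ast\chi^\delta\le\varphi$ is needed (easy, since off $\varphi^{-1}([0,\delta])$ one has $\delta\ast\chi^\delta\le\delta\le\varphi$); this yields $\nu(\varphi_t)\ast\delta\le\mu(\chi^\delta)\ast\delta\le\mu(\varphi)$ for all $\delta<t$, and continuity of $\ast$ gives $\nu(\varphi_t)\ast t\le\mu(\varphi)$ in the limit. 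The same oversight infects your upper-semicontinuity step: a witness $\varphi'$ with $\varphi'\le\varphi$ and $\varphi'\equiv 1$ on $B$ cannot exist unless $\varphi\equiv 1$ on $B$, which is not guaranteed by $B\subset\varphi^{-1}((1-\delta,1])$; here too the paper's fix is to require only $\delta\ast\psi\le\varphi$ and to extract $\mu(\psi)<\eta$ from $\delta\ast\mu(\psi)<\delta\ast\eta$ via Property 4. Your $\le$ direction is essentially the paper's covering argument and is sound in outline, provided the spliced witnesses are taken (as Lemma \ref{Comon1} arranges) to be monotone reparametrizations of $\varphi$, so that they are pairwise comonotone and Property 3 applies to their finite maximum.
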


\begin{proof} Sufficiency. Consider any capacity $\nu\in MX$. By  $\mu$ we denote the  the t-normed integral with respect
to $\nu$. Then $\mu$ satisfies Property 1 by Lemma \ref{Bound}.
Consider any functions $\varphi$, $\psi\in C(X)$ such that $\varphi\le\psi$. The inequality $\mu(\varphi)\le\mu(\psi)$ follows from the obvious inclusion $\varphi_t\subset\psi_t$ and monotonicity of t-norm.

Let  $\varphi$, $\psi\in C(X,[0,1])$ be two comonotone functions. The inequality $\mu(\psi\vee\varphi)\ge\mu(\psi)\vee\mu(\varphi)$ follows from Property 2 proved above. We have $\nu(\psi_t)\ast t\le \mu(\psi)\vee\mu(\varphi)$  and $\nu(\varphi_t)\ast t\le \mu(\psi)\vee\mu(\varphi)$ for each $t\in [0,1]$. Lemma \ref{Comon} yields that $(\psi\vee\varphi)_t=\psi_t$ or $(\psi\vee\varphi)_t=\varphi_t$. Hence $\mu(\psi\vee\varphi)\le\mu(\psi)\vee\mu(\varphi)$ and we proved Property 3.

Consider any $c\in\R$ and $\psi\in C(X)$. Consider any $t\in[0,c]$ and put $b_t=\inf\{l\in[0,1]\mid t\le c\ast l\}$. It follows from continuity of $\ast$ that $c\ast b_t=t$. Moreover, we have $c\ast k\ge t$ iff $k\ge b_t$ for each $k\in [0,1]$. Since  $(c\ast\psi)_t=\emptyset$ for each $t>c$, we have $\mu(c\ast\psi)=\max\{\nu((c\ast\psi)_t)\ast t\mid t\in[0,c]\}=\max\{\nu(\psi^{-1}([b_t,1]))\ast b_t\ast c\mid t\in[0,c]\}\le\max\{\nu(\psi_s)\ast s\mid s\in[0,1]\}\ast c=c\ast\mu(\psi)$.

Choose $t_0\in [0,1]$ such that  $\mu(\psi)=\nu(\psi^{-1}([t_0,1]))\ast t_0$. Since $\ast$ is monotone, we have $(c\ast\psi)_{c\ast t_0}\supset\psi_{t_0}$ and $\nu(c\ast\psi)_{c\ast t_0})\ast c\ast t_0 \ge\nu(\psi_{t_0})\ast t_0\ast c=\mu(\psi)\ast c$. We  proved Property 4. Hence $\mu\in\B$.

Necessity. Take any $\mu\in\B$.  Define $\nu:\F(X)\to [0,1]$ as follows $\nu(A)=\inf\{\mu(\varphi)\mid \varphi\in \Upsilon_A\}$ if $A\ne\emptyset$ and $\nu(\emptyset)=0$. It is easy to see that $\nu$ satisfies Conditions 1 and 2 from the definition of capacity.

Let $\nu(A)<\eta$ for some $\eta\in [0,1]$ and $A\in\F(X)$. Then there exists $\varphi\in \Upsilon_A$ such that $\mu(\varphi)<\eta$. Choose $\beta\in I$ such that $\mu(\varphi)<\beta<\eta$. Since the operation $\ast$ is continuous,  $\eta\ast 1=\eta$ and $\eta\ast 0=0$, there is $\delta\in I$ such that $\eta\ast \delta=\beta$. Evidently, $\delta<1$. Choose $\zeta\in I$ such that $\delta<\zeta<1$ and a function $\psi\in\Upsilon_{\varphi_\zeta}$ such that  $\psi|_{\varphi^{-1}([0,\delta])}=\varphi|_{\varphi^{-1}([0,\delta])}$. Then we have $\delta\ast\psi\le\varphi$ and $\delta\ast\mu(\psi)\le\mu(\varphi)<b=\delta\ast\eta$. Hence $\mu(\psi)<\eta$. Put $U=\psi^{-1}((\zeta,1])$. Evidently $U$ is open and $U\supset A$. We have $\nu(K)\le\mu(\psi)<\eta$ for each compactum $K\subset U$. Hence $\nu\in MX$.

Let us show that $\int_X^{\vee\ast} \varphi d\nu=\mu(\varphi)$ for each $\varphi\in C(X,I)$.  We have $\int_X^{\vee\ast} \varphi d\nu=\max\{\inf\{\mu(\chi)\mid \chi\in \Upsilon_{\varphi_t}\}\ast t\mid t\in[0,1]\}=\max\{\inf\{\mu(t\ast\chi)\mid \chi\in \Upsilon_{\varphi_t}\}\mid t\in[0,1]\}$.

The inequality $\inf\{\mu(\chi)\mid \chi\in \Upsilon_{\varphi_t}\}\ast t\le\mu(\varphi)$ is obvious for each $t\in[0,\mu(\varphi)]$. Consider any $t>\mu(\varphi)$.  For each $\delta<t$ choose a function $\chi^\delta\in\Upsilon_{\varphi_t}$ such that $\chi^\delta|_{\varphi^{-1}([0,\delta])}=\varphi|_{\varphi^{-1}([0,\delta])}$. Then we have $\delta\ast\chi^\delta\le\varphi$ and $\delta\ast\mu(\chi^\delta)\le\mu(\varphi)$.  Since the operation $\ast$ is continuous, $\inf\{\mu(\chi)\mid \chi\in \Upsilon_{\varphi_t}\}\ast t\le\mu(\varphi)$. Hence $\int_X^{\vee\ast} \varphi d\nu\le\mu(\varphi)$.

Suppose $b=\int_X^{\vee\ast} \varphi d\nu<\mu(\varphi)=a$. Put $m=\max_{x\in X}\varphi(x)$.  Then for each $t\in[0,m]$ there exists $\chi^t\in\Upsilon_{\varphi_t}$ such that $\mu(t\ast\chi^t)<a$.  We can assume that $\chi^t$ is comonotone with $\varphi$ by Lemma \ref{Comon1}. For each $t\in[0,m)$ choose $t'>t$ such that $t'\ast\mu(\chi^t)<a$. The set $V_t=\{y\mid t'\ast\chi^t(y)>\varphi(y)\}$ is an open neighborhood for each $x\in X$ with $\varphi(x)=t$.

Now we will choose an open neighborhood $W$ of the set $\varphi_m$.
Put $\xi=\min\{\eta\in[0,1]\mid\eta\ast m\le m\}$. We have $\xi\ast m=m$. Since $a\le m$, using arguments as before we can find $\gamma\in[0,1]$ such that $\gamma\ast m=a$. Then we have $\xi\ast a=\xi\ast \gamma\ast m=\gamma\ast m=a$. Choose $\lambda<\xi$ such that $b<\lambda\ast a$. Then there exists $\omega\in\Upsilon_{\varphi_{m\ast\lambda}}$ such that $m\ast\lambda\ast\mu(\omega)<\lambda\ast a$, hence  $m\ast\mu(\omega)< a$. We can assume that $\omega$ is comonotone with $\varphi$ by Lemma \ref{Comon1}. Since $m\ast\lambda<m$, the open set $W=\varphi^{-1}(m\ast\lambda,m]$ contains the set $\varphi_m$. Let us remark that $m\ast\omega(x)= m$ for each $x\in W$.

  We can choose a finite subcover $\{W,V_{t_1},\dots,V_{t_k}\}$ of the open cover $\{W\}\cup\{V_t\mid t\le m\}$. We can assume that $t_0=m\ast\lambda$ and $t_i\in [0,m)\setminus \{t_0\}$ for $i>0$.
 Then we have that the functions $t_i'\ast\chi^{t_i}$ and $\omega$ are pairwise comonotone, hence $\mu(\omega\bigvee(\bigvee_{i=1}^k\{t_i'\ast\chi^{t_i}\}))<a$. On the other hand $\varphi\le \omega\bigvee(\bigvee_{i=1}^k\{t_i'\ast\chi^{t_i}\})$ and we obtain a contradiction.
\end{proof}

Let us remark that for Sugeno integral (when $\ast=\min$) instead Property 3 we can consider a weaker condition:
 $\mu(c_X\bigvee\varphi)=c\ast\mu(\varphi)$ for each $c\in\R$ and $\varphi\in C(X,[0,1])$ see \cite{Nyk} and \cite{R}.

For $\psi\in C(X,[0,1])$ we define a function $l_X^\psi:MX\to[0,1]$ by the formula $l_X^\psi(\nu)=\int_X^{\vee\ast} \psi d\nu$. We also define a map $l_X:MX\to[0,1]^{C(X,[0,1])}$ taking the diagonal product $l_X=(l_X^\psi)_{\psi\in C(X,[0,1])}$.

\begin{lemma}\label{contint} The map $l_X^\psi$ is continuous for each $\psi\in C(X,[0,1])$.
\end{lemma}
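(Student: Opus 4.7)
Fix $\nu_0\in MX$ and $\varepsilon>0$, set $b=l_X^\psi(\nu_0)=\max\{\nu_0(\psi_t)\ast t\mid t\in[0,1]\}$. The plan is to produce an open neighborhood $W\ni\nu_0$ in $MX$ on which $|l_X^\psi(\nu)-b|\le\varepsilon$, by separately bounding $l_X^\psi$ from above and from below. The tools are uniform continuity of $\ast$ on $[0,1]^2$, monotonicity of $\nu$ and of $\ast$, and the two kinds of subbase elements of $MX$: $O_-(F,a)$ for closed $F$ and $O_+(U,a)$ for open $U$.

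For the upper bound, choose $\delta>0$ so small that changing either argument of $\ast$ by at most $\delta$ moves its value by at most $\varepsilon/2$, and partition $[0,1]$ as $0=t_0<t_1<\cdots<t_n=1$ with mesh $<\delta$. Each $\psi_{t_i}$ is closed, so $W_{\mathrm{up}}=\bigcap_{i=0}^{n}O_-(\psi_{t_i},\nu_0(\psi_{t_i})+\delta)$ is an open neighborhood of $\nu_0$ (factors whose second coordinate exceeds $1$ are discarded). For $\nu\in W_{\mathrm{up}}$ and $t\in[t_{i-1},t_i]$ the inclusion $\psi_t\subset\psi_{t_{i-1}}$ and monotonicity yield $\nu(\psi_t)\ast t\le\nu(\psi_{t_{i-1}})\ast t_i\le(\nu_0(\psi_{t_{i-1}})+\delta)\ast t_i$, and by the choice of $\delta$ together with $\nu_0(\psi_{t_{i-1}})\ast t_{i-1}\le b$, this is at most $b+\varepsilon$; hence $l_X^\psi(\nu)\le b+\varepsilon$.

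For the lower bound, the case $b=0$ is trivial since $l_X^\psi\ge 0$, so assume $b>0$. First, the max defining $l_X^\psi(\nu_0)$ is attained: the function $g(t)=\nu_0(\psi_t)$ is nonincreasing, and the upper semicontinuity condition on $\nu_0$ (applied to an open neighborhood of $\psi_t$ which, by continuity of $\psi$ and compactness of $X$, contains $\psi_s$ for all $s$ sufficiently close to $t$ from below) gives left-continuity of $g$; thus $g$ is upper semicontinuous, as is $g(t)\ast t$, attaining its maximum at some $t_0\in(0,1]$ with $s_0=g(t_0)>0$. By continuity of $\ast$ choose $0<s'<s_0$ and $0<t'<t_0$ with $s'\ast t'>b-\varepsilon$, and set $U=\psi^{-1}((t',+\infty))$, so $\psi_{t_0}\subset U\subset\psi_{t'}$. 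The extension of $\nu$ to open sets gives $\nu_0(U)\ge\nu_0(\psi_{t_0})=s_0>s'$, so $W_{\mathrm{low}}=O_+(U,s')$ is an open neighborhood of $\nu_0$; and for $\nu\in W_{\mathrm{low}}$ every closed $K\subset U$ satisfies $K\subset\psi_{t'}$, whence $\nu(U)\le\nu(\psi_{t'})$, giving $l_X^\psi(\nu)\ge\nu(\psi_{t'})\ast t'\ge s'\ast t'>b-\varepsilon$. The main difficulty is precisely this asymmetry between closed and open sets: the integrand is controlled by $\nu$ on the closed level sets $\psi_t$, whereas the subbase provides lower bounds for $\nu$ only on open sets, so one must interpolate an open $U$ strictly between two level sets $\psi_{t_0}\subset\psi_{t'}$ by perturbing the threshold downward, using continuity of $\ast$ to absorb the resulting loss.
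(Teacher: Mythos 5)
Your proof is correct and takes essentially the same approach as the paper: the upper estimate comes from a mesh partition of $[0,1]$ and finitely many subbasic neighborhoods $O_-(\psi_{t_i},\cdot)$ together with uniform continuity of $\ast$, and the lower estimate from a single subbasic neighborhood $O_+(U,\cdot)$ where $U$ is an open superlevel set squeezed between two closed level sets, exactly as in the paper's argument. The only differences are cosmetic (you bound $|l_X^\psi(\nu)-b|$ on a neighborhood of a fixed $\nu_0$ rather than checking openness of preimages of subbasic intervals, and you use the inclusion $U\subset\psi_{t'}$ directly where the paper extracts an intermediate level $p\in(t-\delta,t]$).
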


\begin{proof} Consider any $\nu\in MX$ such that $l_X^\psi(\nu)<a$ for some $a\in(0,1]$. Put $\varepsilon=a-l_X^\psi(\nu)$. Since the map $\ast:[0,1]\times[0,1]\to[0,1]$ is continuous and the space $[0,1]\times[0,1]$ is compact, there exists $\delta>0$ such that for each $(r_1,r_2)$, $(p_1,p_2)\in [0,1]\times[0,1]$ such that $|r_1-p_1|<\delta$ and $|r_2-p_2|<\delta$ we have $|r_1\ast r_2-p_1\ast p_2|<\varepsilon$. Choose $k\in\N$ such that $\frac{1}{k}<\delta$ and put $t_i=\frac{i}{k}$ for $i\in\{0,\dots,k\}$. Define an open set $O_i=\{\mu\in MX\mid \mu(\psi_{t_i})<\nu(\psi_{t_i})+\delta\}$ and put $O=\cap_{i=1}^kO_i$. Evidently $O$ is an open neighborhood of $\nu$. Consider any $\mu\in O$ and $t\in[0,1]$. Let $i$ be a maximal element of $\{0,\dots,k\}$ such that $t_i\le t$. Then we have $\mu(\psi_{t})\ast t\le\mu(\psi_{t_i})\ast t<\nu(\psi_{t_i})\ast t_i+\varepsilon\le a$. Hence $l_X^\psi(\mu)<a$.

Now, consider any $\nu\in MX$ such that $l_X^\psi(\nu)>a$ for some $a\in[0,1)$. Then there exists $t\in[0,1]$ such that $\nu(\psi_{t})\ast t> a$. Put $\varepsilon=\nu(\psi_{t})\ast t- a$. As before we choose $\delta>0$ such that for each $(r_1,r_2)$, $(p_1,p_2)\in [0,1]\times[0,1]$ such that $|r_1-p_1|<\delta$ and $|r_2-p_2|<\delta$ we have $|r_1\ast r_2-p_1\ast p_2|<\varepsilon$.  Define an open set $O=\{\mu\in MX\mid \mu(\psi^{-1}(t-\delta,1])>\nu(\psi^{-1}(t-\delta,1])-\delta\}$. Evidently $O$ is an open neighborhood of $\nu$. Consider any $\mu\in O$. There exists $p\in(t-\delta,t]$ such that $\mu(\psi_p)>\nu(\psi^{-1}(t-\delta,1])-\delta\ge\nu(\psi_{t})-\delta$. Then we have $\mu(\psi_{p})\ast p> \nu(\psi_{t})\ast t-\varepsilon=a$. Hence $l_X^\psi(\mu)>a$ and the map $l_X^\psi$ is continuous.
\end{proof}

\begin{corollary}\label{emb} The map $l_X$ is a topological embedding.
\end{corollary}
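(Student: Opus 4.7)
The plan is to deduce the corollary from Lemma~\ref{contint} together with the uniqueness clause of Theorem~\ref{repr}, using the standard fact that a continuous injection from a compactum into a Hausdorff space is automatically a topological embedding.

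First I would verify continuity of $l_X$. The target $[0,1]^{C(X,[0,1])}$ carries the product (Tychonoff) topology, so a map into it is continuous if and only if each coordinate projection is continuous. The $\psi$-coordinate of $l_X$ is precisely $l_X^\psi$, which is continuous by Lemma~\ref{contint}. Hence $l_X$ is continuous.

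Next I would establish injectivity. Suppose $\nu_1,\nu_2\in MX$ satisfy $l_X(\nu_1)=l_X(\nu_2)$, i.e.\ $\int_X^{\vee\ast}\psi\,d\nu_1=\int_X^{\vee\ast}\psi\,d\nu_2$ for every $\psi\in C(X,[0,1])$. Denote by $\mu$ this common functional on $C(X,[0,1])$. By the sufficiency part of Theorem~\ref{repr}, $\mu\in\B$, and by the uniqueness clause of Theorem~\ref{repr}, there is exactly one capacity whose t-normed integral equals $\mu$. Therefore $\nu_1=\nu_2$, and $l_X$ is injective.

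Finally, recall from \cite{NZ} (quoted in the section) that $MX$ is a compactum, while $[0,1]^{C(X,[0,1])}$ is Hausdorff. A continuous injection from a compact space to a Hausdorff space sends closed sets to closed sets, and hence is a homeomorphism onto its image; this gives exactly that $l_X$ is a topological embedding. Since all nontrivial work sits in Lemma~\ref{contint} and Theorem~\ref{repr}, no step here is a real obstacle---the only thing to watch is that one has the full uniqueness statement of Theorem~\ref{repr} available for \emph{arbitrary} compacta, which is why it was important to have generalized the finite-$X$ characterization beforehand.
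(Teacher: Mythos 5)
Your proposal is correct and is exactly the argument the paper intends (the corollary is stated without proof, as an immediate consequence of Lemma~\ref{contint} for continuity, the uniqueness clause of Theorem~\ref{repr} for injectivity, and the compactness of $MX$ for the closed-map/embedding conclusion). No issues.
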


\section{Tensor products of capacities} For a continuous map of compacta $f:X\to Y$ we define the map $Mf:MX\to MY$ by the formula $Mf(\nu)(A)=\nu(f^{-1}(A))$ where $\nu\in MX$ and $A\in\F(Y)$. The map $Mf$ is continuous.  In fact, this extension of the construction $M$ defines the capacity functor in the category of compacta and continuous maps. The categorical technics are very useful for investigation of capacities on compacta (see \cite{NZ} for more details). We try to avoid the formalism of category theory in this paper,  but we follow the main ideas of such approach.

The tensor product operation of probability measures is well known and very useful partially for investigation of the spaces of probability measures on compacta (see for example Chapter 8 from \cite{FF}). General categorical definition of tensor product for any functor was given in \cite{BR}. Applying this definition to the capacity functor we obtain that a tensor product of capacities on compacta $X_1$ and $X_2$ is  a continuous map $$\otimes:MX_1\times MX_2\to M(X_1\times X_2)$$ such that for each $i\in\{1,2\}$ we have $M(p_i)\circ\otimes= \pr_i$ where $p_i:X_1\times X_2\to X_i$, $\pr_i:MX_1\times MX_2\to MX_i$ are the corresponding projections.

A tensor product for capacities was introduced in \cite{KZ}.  This definition is based on the capacity monad structure.   An explicit formula for evaluating  tensor product of capacities was given in \cite{R4} omitting the formalism of category theory. For $\mu_1\in MX_1$, $\mu_2\in MX_2$ and $B\in\F(X_1\times X_2)$ we put $$\mu_1\otimes\mu_2(B)=\sup\{t\in[0,1]\mid\mu_1(\{x\in X_1\mid \mu_2(p_2((\{x\}\times X_2)\cap B))\ge t\}\ge t\}.$$

The problem of multiplication of capacities was deeply considered in the possibility theory and it application to the game theory and the decision making theory where the term joint possibility distribution is used. A standard choice  of a joint possibility distribution is based on the minimum operation. For $\mu_1\in M_\cup X_1$, $\mu_2\in M_\cup X_2$ and $(x,y)\in X_1\times X_2$ we put $$[\mu_1\otimes\mu_2](x,y)=[\mu_1](x)\wedge[\mu_2](y).$$ (Let us remind that by $[\nu]$ we denote the density of a possibility capacity $\nu$.) It is easy to check that both definitions coincide in the class of possibility capacities, the difference is only in terms.

 A more general approach is also used where the minimum operation is changed by any t-norm (see for example \cite{DP2}). We will use this definition in our paper but we  prefer the term tensor product.

So, we fix a continuous t-norm $\ast$ and consider a tensor product generated by $\ast$ defined as follows. For possibility capacities $\mu_1\in M_\cup X_1$, $\mu_2\in M_\cup X_2$ and $(x,y)\in X_1\times X_2$ we put $$[\mu_1\circledast\mu_2](x,y)=[\mu_1](x)\ast[\mu_2](y).$$

We also can generalize the above mentioned formula from \cite{R4}. For capacities $\mu_1\in MX_1$, $\mu_2\in MX_2$ and $B\in\F(X_1\times X_2)$ we put $$\mu_1\widetilde{\circledast}\mu_2(B)=\sup\{t\in[0,1]\mid\mu_1(\{x\in X_1\mid \mu_2(p_2((\{x\}\times X_2)\cap B))\ge t\})\ast t\}.$$

The following theorem shows that  both definitions coincide in the class of possibility capacities.

\begin{theorem} For possibility capacities $\mu_1\in M_\cup X_1$, $\mu_2\in M_\cup X_2$ and $(x,y)\in X_1\times X_2$ we have $$[\mu_1\widetilde{\circledast}\mu_2](x,y)=[\mu_1](x)\ast[\mu_2](y).$$
\end{theorem}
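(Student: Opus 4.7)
The plan is to unfold the definition of $\widetilde{\circledast}$ on the singleton $B=\{(x,y)\}$ (reading the defining formula as a supremum of the values $\mu_1(\{\cdot\})\ast t$ over $t\in[0,1]$), and then read off the value of that supremum by a direct case analysis. The key observation is that, because $B$ is a single point, the inner slicing almost collapses and only two nontrivial values of $\mu_1(\cdot)$ appear.

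First I would analyze the inner data. For each $x'\in X_1$, the slice $(\{x'\}\times X_2)\cap\{(x,y)\}$ equals $\{(x,y)\}$ when $x'=x$ and is empty otherwise, and $p_2$ sends these to $\{y\}$ and $\emptyset$ respectively. Since $\mu_2\in M_\cup X_2$, we have $\mu_2(\{y\})=[\mu_2](y)$ and $\mu_2(\emptyset)=0$. Hence for $t\in[0,1]$ the set $A_t=\{x'\in X_1\mid \mu_2(p_2((\{x'\}\times X_2)\cap\{(x,y)\}))\ge t\}$ equals $X_1$ when $t=0$, equals $\{x\}$ when $0<t\le [\mu_2](y)$, and is empty when $t>[\mu_2](y)$. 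Applying $\mu_1\in M_\cup X_1$, for which $\mu_1(\{x\})=[\mu_1](x)$, one obtains $\mu_1(A_0)=1$, $\mu_1(A_t)=[\mu_1](x)$ on $(0,[\mu_2](y)]$, and $\mu_1(A_t)=0$ above $[\mu_2](y)$.

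Then I would evaluate the supremum of $\mu_1(A_t)\ast t$ over $t\in[0,1]$. At $t=0$ the value is $1\ast 0=0$, using the general identity $s\ast 0=0$ which follows from $s\ast 1=s$, commutativity, and monotonicity of any t-norm. On the interval $(0,[\mu_2](y)]$ the value $[\mu_1](x)\ast t$ is monotone non-decreasing in $t$ by monotonicity of $\ast$, so its supremum on this interval is attained at $t=[\mu_2](y)$ and equals $[\mu_1](x)\ast[\mu_2](y)$. For $t>[\mu_2](y)$ the value is $0\ast t=0$. Taking the overall sup gives exactly $[\mu_1](x)\ast[\mu_2](y)$, and the degenerate subcase $[\mu_2](y)=0$ is absorbed by the same identity $s\ast 0=0$.

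I do not foresee a real obstacle here; the statement is essentially a direct computation and the only point that deserves care is replacing the supremum on $(0,[\mu_2](y)]$ by the value at the endpoint, which requires only monotonicity of $\ast$ and not its continuity. Continuity of $\ast$ would enter only if one wanted to pass from this singleton identity to a statement about $\widetilde{\circledast}$ on arbitrary closed sets (in particular to verify that $\mu_1\widetilde{\circledast}\mu_2$ is itself a possibility capacity, which would then allow one to identify $\widetilde{\circledast}$ with $\circledast$ globally); this is beyond the present statement.
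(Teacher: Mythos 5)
Your proposal is correct and follows essentially the same route as the paper: slice the singleton $\{(x,y)\}$, identify the level sets $A_t$ as $\{x\}$ for $0<t\le[\mu_2](y)$ and $\emptyset$ above, and read off the supremum of $\mu_1(A_t)\ast t$ as $[\mu_1](x)\ast[\mu_2](y)$. Your extra care at $t=0$ (where the level set is all of $X_1$, contributing $1\ast 0=0$) and your explicit justification that the supremum on $(0,[\mu_2](y)]$ is attained at the endpoint by monotonicity are harmless refinements of the paper's terser case split.
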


\begin{proof} We have $$p_2((\{z\}\times X_2)\cap \{(x,y)\}))=\begin{cases}
\emptyset,&z\neq x,\\
\{y\},&z=x\end{cases},$$
thus
$$\{z\in X_1\mid \mu_2(p_2((\{z\}\times X_2)\cap \{(x,y)\}))\ge t\})=\begin{cases}
\{x\},&t\le\mu_2(\{y\}),\\
\emptyset,&t>\mu_2(\{y\})\end{cases}.$$
Hence $[\mu_1\widetilde{\circledast}\mu_2](x,y)=\mu_1\widetilde{\circledast}\mu_2(\{(x,y)\}))
=\mu_1(\{x\})\ast\mu_2(\{y\})=[\mu_1](x)\ast[\mu_2](y).$

\end{proof}

It was noticed in \cite{KZ} that we can extend the definition of tensor product to any finite number of factors by induction.

\section{Nash equilibrium in mixed strategies} Let us recall the notion of Nash equilibrium and some facts concerning existence of such equilibrium. We consider an $n$-players game $f:X=\prod_{j=1}^n X_j\to\R^n$ with compact Hausdorff spaces of strategies $X_i$. The coordinate function $f_i:X\to \R$ is called the payoff function of $i$-th player. For $x\in X$ and $t_i\in X_i$ we use the notation $(x;t_i)=(x_1,\dots,x_{i-1},t_i,x_{i+1},\dots,x_n)$. A point $x\in X$ is called a Nash max-equilibrium (min-equilibrium)  point if for each $i\in\{1,\dots,n\}$ and for each $t_i\in X_i$ we have $f_i(x;t_i)\le f_i(x)$ ($f_i(x;t_i)\ge f_i(x)$).

Usually some additional convexity structures are needed to establish existence of  Nash equilibrium. A family $\C$ of closed subsets of a compactum $X$ is
called a {\it convexity} on $X$ if $\C$ is stable for intersection
and contains $X$ and the empty set. The elements of $\C$ are called
$\C$-convex (or simply convex). Although we follow general concept  of abstract convexity from \cite{vV}, our definition is different.
We consider only closed convex sets. Such structure is called a closure structure in \cite{vV}. Our definition is the same as in \cite{W}. The whole family of convex sets in the sense of \cite{vV} could be obtained by the operation of
union of up-directed families. In what follows, we assume that each convexity contains all singletons.

A convexity $\C$ on $X$ is called $T_4$ (normal) if for each disjoint $C_1$, $C_2\in
\C$ there exist $S_1$, $S_2\in\C$ such that $S_1\cup S_2=X$,
$C_1\cap S_2=\emptyset$ and $C_2\cap S_1=\emptyset$ (see for example \cite{RZ}).

Now, let $\C_i$ be a convexity on $X_i$. We say that a function $f_i:X=\prod_{i=1}^n X_i\to\R$ is quasiconcave (quasiconvex) with respect to the $i$-th variable if we have $(f_i^x)^{-1}([t;+\infty))\in\C_i$ ($(f_i^x)^{-1}((-\infty;t])\in\C_i$) for each $t\in\R$ and $x\in X$ where $f_i^x:X_i\to\R$ is the function defined as follows $f_i^x(t_i)=f_i(x;t_i)$ for $t_i\in X_i$.

\begin{theorem}\label{NN}\cite{R3} Let $f:X=\prod_{j=1}^n X_j\to\R^n$ be a game with a  normal convexity  $\C_i$ defined  on each compactum $X_i$ such that all convex sets are connected, the function $f$ is continuous  and the function $f_i:X\to\R$ is quasiconcave (quasiconvex) with respect to the $i$-th variable for each $i\in\{1,\dots,n\}$. Then there exists a Nash max-equilibrium (min-equilibrium) point.
\end{theorem}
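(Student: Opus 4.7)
The plan is to translate the classical Nash proof (best-response correspondence plus a fixed point theorem) into the abstract convexity setting permitted by the hypotheses. First I would define, for each player $i$ and each $x\in X$, the best-response set
$$\Phi_i(x)=\{t_i\in X_i\mid f_i(x;t_i)=\max_{s_i\in X_i}f_i(x;s_i)\}$$
and then set $\Phi(x)=\prod_{i=1}^n\Phi_i(x)\subset X$. A Nash max-equilibrium is precisely a fixed point $x\in\Phi(x)$, so everything reduces to proving that $\Phi$ has a fixed point.

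Next I would verify the hypotheses needed for a fixed point theorem. Non-emptiness of $\Phi_i(x)$ follows from compactness of $X_i$ and continuity of $f$. To see that $\Phi_i(x)\in\C_i$, let $m=\max_{s_i\in X_i}f_i(x;s_i)$ and write
$$\Phi_i(x)=\bigcap_{t<m}(f_i^x)^{-1}([t,+\infty)).$$
Each set in the intersection is convex by quasiconcavity of $f_i$ in the $i$-th variable, so the intersection is convex, being the intersection of a stable family. Upper semicontinuity of $\Phi_i$ as a set-valued map $X\rightrightarrows X_i$ is a routine consequence of continuity of $f$ and compactness of $X_i$ (if $x^\alpha\to x$ and $t^\alpha\in\Phi_i(x^\alpha)$ converges to $t$, then $f_i(x;t)\ge f_i(x;s_i)$ for every $s_i$, since the maximum function is continuous). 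The product map $\Phi$ then has non-empty values convex in the product convexity $\prod_i\C_i$ and is upper semicontinuous.

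Finally, I would invoke an abstract-convexity fixed point theorem of Kakutani--Fan type valid in compact spaces equipped with a $T_4$ convexity whose convex sets are connected. This is exactly the hypothesis package used in the Briec--Horvath framework and van de Vel's theory: normality supplies Urysohn-type separation (hence a KKM principle), and connectedness of convex sets lets one run the standard selection/approximation argument to produce a fixed point of an upper semicontinuous compact-valued map with non-empty closed convex values. Applying this theorem to $\Phi$ yields the desired Nash max-equilibrium point. The min-equilibrium case is identical after replacing argmax by argmin and quasiconcavity by quasiconvexity.

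The main obstacle is the fixed point theorem itself: outside a linear or idempotent convex structure, one has no convex combinations to fall back on, so the step from ``$\Phi$ has convex values and is upper semicontinuous'' to ``$\Phi$ has a fixed point'' relies entirely on the abstract convexity machinery (normal separation plus connected convex sets) rather than on any explicit averaging. Everything else---computing best responses, checking that quasiconcavity feeds into convex values, upper semicontinuity---is essentially formal.
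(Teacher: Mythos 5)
The paper does not actually prove Theorem \ref{NN}: it is imported verbatim from \cite{R3}, so there is no in-text argument to compare yours against, and the review below assesses your sketch on its own terms. As far as it goes, it is sound and is the standard route: the best-response sets are nonempty (compactness plus continuity of $f$), closed, and convex --- indeed you can skip the intersection entirely, since with $m=\max_{s_i\in X_i}f_i(x;s_i)$ one has $\Phi_i(x)=(f_i^x)^{-1}([m,+\infty))\in\C_i$ directly from the paper's definition of quasiconcavity --- and upper semicontinuity is the maximum theorem (valid with nets in compact Hausdorff spaces). The entire mathematical content, however, sits in the fixed point theorem you invoke in one sentence, and two things need to be said about it. First, such a theorem is not folklore outside linear or idempotent convexity; the result that does exactly this job is Wieczorek's Kakutani-type theorem \cite{W}, which is precisely where the paper's definition of normal convexity comes from, and \cite{R3} is essentially the observation that it applies to quasiconcave/quasiconvex games. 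Second, the version you need is the one stated for products: you want a fixed point of $\Phi=\prod_i\Phi_i$ where each $\Phi_i$ is a correspondence from $X$ into the factor $X_i$, because normality and connectedness of convex sets are hypotheses on each $\C_i$ separately, and it is not immediate (and you do not check) that the product convexity on $X=\prod_j X_j$ is again normal with connected convex sets. So your proposal is the right skeleton, but to count as a proof it must name and correctly instantiate that product fixed point theorem rather than gesture at ``abstract convexity machinery''; everything else in your write-up is, as you say, formal.
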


Let us remark that the previous theorem was proved in \cite{R3} only for the max-equilibrium. But the proof is the same for the min-equilibrium.

Now we apply these general concepts to the spaces of possibility capacities. We consider a game $u:Z=\prod_{i=1}^n Z_i\to[0,1]^n$ with compact Hausdorff spaces of pure strategies $Z_1,\dots,Z_n$ and continuous payoff functions $u_i:\prod_{i=1}^n Z_i\to[0,1]$. Let  $\star$ and $\ast$ be two t-norms.  We will extend the game $u:Z=\prod_{i=1}^n Z_i\to[0,1]^n$ to a game in mixed strategies  $eu:\prod_{i=1}^n M_\cup Z_i\to[0,1]^n$ using the integral generated by t-norm $\star$ and the tensor product generated by t-norm $\ast$.

 We define expected payoff functions $eu_i:\prod_{j=1}^n M_\cup Z_j\to[0,1]$ by the formula  $$eu_i(\nu_1,\dots,\nu_n)=\int_X^{\vee\star} u_i d(\nu_1\circledast\dots\circledast\nu_n)$$
 for $(\nu_1,\dots,\nu_n)\in\prod_{j=1}^n M_\cup Z_j$.

Lemma \ref{contint} and continuity of tensor product imply the following lemma.

 \begin{lemma}\label{cont} The function $eu_i$ is continuous for each $i\in\{1,\dots,n\}$.
\end{lemma}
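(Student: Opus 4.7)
The plan is to exhibit $eu_i$ as a composition of two maps each of which is already known to be continuous, so that the result follows immediately. Write $Z=\prod_{j=1}^n Z_j$ and define the $n$-fold tensor product map
$$T\colon \prod_{j=1}^n M_\cup Z_j \to MZ, \qquad T(\nu_1,\dots,\nu_n)=\nu_1\circledast\cdots\circledast\nu_n,$$
together with the integration functional
$$l_Z^{u_i}\colon MZ\to [0,1], \qquad l_Z^{u_i}(\mu)=\int_Z^{\vee\star} u_i\,d\mu.$$
By definition $eu_i = l_Z^{u_i}\circ T$, so continuity of $eu_i$ reduces to continuity of each factor.

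First I would address continuity of $T$. The binary tensor product is continuous by hypothesis (this is part of the categorical definition recalled in the preceding section, and for possibility capacities the explicit density formula $[\mu_1\circledast\mu_2](x,y)=[\mu_1](x)\ast[\mu_2](y)$ together with continuity of $\ast$ makes this transparent). Since the $n$-fold tensor product is defined inductively as iterated binary tensor products, its continuity follows by a straightforward induction: the composition of continuous maps
$$\prod_{j=1}^n M_\cup Z_j \xrightarrow{\;\id\times\cdots\times(\circledast)\;} \prod_{j=1}^{n-1} M_\cup(Z_j\times\cdots) \xrightarrow{\;\circledast\;} M_\cup Z$$
is continuous at each step.

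Second, continuity of $l_Z^{u_i}$ is exactly the content of Lemma~\ref{contint} applied to the continuous function $\psi=u_i\in C(Z,[0,1])$ on the compactum $Z$.

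Composing the two, $eu_i=l_Z^{u_i}\circ T$ is continuous as a map from $\prod_{j=1}^n M_\cup Z_j$ to $[0,1]$, which is the required conclusion. There is no real obstacle here; the only point worth checking carefully is that the inductive extension of $\circledast$ to $n$ factors preserves both continuity and the property of landing in (or at least being compatible with) the larger capacity space $MZ$ where Lemma~\ref{contint} applies, but this is guaranteed by the construction reviewed in the preceding section.
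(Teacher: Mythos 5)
Your proposal is correct and matches the paper's argument exactly: the paper likewise derives the lemma by composing the integration functional, whose continuity is Lemma~\ref{contint}, with the (inductively extended, continuous) tensor product map. The extra care you take over the $n$-fold induction and over the fact that $\circledast$ lands in a subspace of $MZ$ where Lemma~\ref{contint} applies is exactly the content the paper leaves implicit.
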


We discuss existence of Nash equilibrium in mixed strategies represented by possibility capacities. There exist a trivial solution of the problem for max-equilibrium. We can consider the natural order on $M_\cup Z_i$. Then each $M_\cup Z_i$ contains the greatest element $\mu_i$ defined by the formula $$\mu_i(A)=\begin{cases}
0,&A=\emptyset,\\
1,&A\neq\emptyset\end{cases}$$ for $A\in\F(Z_i)$. Hence $(\mu_1,\dots,\mu_n)$ is a Nash max-equilibrium point.  There is no such trivial solution for the min-equilibrium, since $M_\cup Z_i$ does not contain the smallest element.

We will need some convexity structure on $M_\cup X$ to establish existence of the min-equilibrium. We use an idempotent convexity considered in \cite{Ch} and \cite{BCR} for finite-dimensional spaces where it was called B-convexity. Firstly, we introduce it on a cube  $[0,1]^S$, where $S$ is any set (finite or infinite).  We call a subset $C$ of $[0,1]^S$ B-convex if for each $x$, $y\in [0,1]^S$ and  $\alpha\in[0,1]$ we have $\alpha\cdot x\vee y\in C$ (the operations of maximum $\vee$ and multiplication for a scalar $\cdot$ are taken coordinate-wise).

Partially, we can consider B-convexity on $M_\cup X$ for each compactum $X$. Take any $\nu$, $\mu\in M_\cup X$ and $s\in[0,1]$. Put $(s\cdot\nu\vee\mu)(A)=s\cdot\nu(A)\vee\mu(A)$ for $A\in\F(X)$. It is easy to check that $s\cdot\nu\vee\mu\in M_\cup X$. It is also easy to see that the introduced operation commutes with taking the density, i.e. $[s\cdot\nu\vee\mu]=s\cdot[\nu]\vee[\mu]$ (we consider $[s\cdot\nu\vee\mu]$, $[\nu]$ and $[\mu]$ as elements of $[0,1]^X$.  We call a subset $C$ of $M_\cup X$ B-convex if for each $\nu$, $\mu\in M_\cup X$ and  $s\in[0,1]$ we have $s\cdot\nu\vee\mu\in C$. Evidently, each B-convex set is connected. So, we consider on $M_\cup X$ a convexity structure $\C_X$ which consists of all closed B-convex subsets of  $M_\cup X$.

The proof of the following lemma reduces to routine checking and so we omit it.

\begin{lemma}\label{CPC} The set $l_X(A)$ is B-convex in $[0,1]^{C(X,[0,1])}$ for each $A\in \C_X$ and $l_X^{-1}(B)\in\C_X$ for each closed B-convex subset $B\subset[0,1]^{C(X,[0,1])}$.
\end{lemma}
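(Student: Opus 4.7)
My plan is to reduce both halves of the lemma to a single compatibility identity: for $\nu,\mu\in M_\cup X$ and $s\in[0,1]$,
\[
l_X(s\cdot\nu\vee\mu)=s\cdot l_X(\nu)\vee l_X(\mu),
\]
where the right-hand side is formed coordinate-wise in $[0,1]^{C(X,[0,1])}$. Granted this identity, the first assertion is immediate: any pair of points in $l_X(A)$ comes from a pair in $A$, the identity pushes the B-convex combination back inside $l_X$, and B-convexity of $A$ yields $l_X(s\cdot\nu\vee\mu)\in l_X(A)$. The second assertion is equally quick: $l_X^{-1}(B)$ is closed by continuity of $l_X$ (Lemma \ref{contint}), and B-convexity of $l_X^{-1}(B)$ follows from the identity together with B-convexity of $B$.

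To prove the identity it is enough to check it in each coordinate, i.e., for each $\psi\in C(X,[0,1])$,
\[
l_X^\psi(s\cdot\nu\vee\mu)=s\cdot l_X^\psi(\nu)\vee l_X^\psi(\mu).
\]
The first step is to rewrite the t-normed integral on a possibility capacity in density form. Since $\nu(\psi_t)=\max\{[\nu](x):\psi(x)\ge t\}$ whenever $\psi_t\ne\emptyset$ and $\ast$ is monotone in its second argument, swapping the two maxima gives
\[
l_X^\psi(\nu)=\max_{t\in[0,1]}\nu(\psi_t)\ast t=\max_{x\in X}[\nu](x)\ast\psi(x),
\]
where the maximum is attained because $[\nu]$ is upper semicontinuous, $\psi$ and $\ast$ are continuous, and $X$ is compact. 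Substituting $[s\cdot\nu\vee\mu](x)=s\cdot[\nu](x)\vee[\mu](x)$, distributing $\ast$ over $\vee$ by monotonicity of $\ast$, splitting the resulting max as $\max_x(\ldots)\vee\max_x(\ldots)$, and pulling the scalar $s$ past $\ast$ and the maximum (which is where associativity/continuity of the underlying t-norm operation enters) then delivers the identity.

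The main obstacle here is really just the bookkeeping for how $\ast$ interacts with $\vee$, $\max$, and the scalar operation; the observation $[s\cdot\nu\vee\mu]=s\cdot[\nu]\vee[\mu]$ already signals that the density picture is the right language in which to carry out the calculation. No new topological or fixed-point input is required beyond what has already been set up, which is consistent with the lemma being presented as a routine compatibility check.
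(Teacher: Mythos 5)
Your reduction of both assertions to the single identity $l_X(s\cdot\nu\vee\mu)=s\cdot l_X(\nu)\vee l_X(\mu)$ is logically sound, and the density formula $l_X^\psi(\nu)=\max_{x\in X}[\nu](x)\ast\psi(x)$ that you derive along the way is correct. The gap is in the final step: ``pulling the scalar $s$ past $\ast$'' requires $(s\cdot a)\ast c=s\cdot(a\ast c)$, where $\cdot$ is ordinary real multiplication (that is the operation used in the B-convex combination) while $\ast$ is the t-norm. Associativity of $\ast$ is irrelevant here, since $s\cdot{}$ is not the t-norm operation, and the required identity is false for general continuous t-norms: already for $\ast=\wedge$ one has $(\tfrac12\cdot 1)\wedge\tfrac12=\tfrac12$ but $\tfrac12\cdot(1\wedge\tfrac12)=\tfrac14$. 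Concretely, take $X=\{x,y\}$, $\ast=\wedge$, $[\nu]=(1,0)$, $[\mu]=(0,1)$, $s=\tfrac12$ and $\psi=(\tfrac12,0)$; then $l_X^\psi(s\cdot\nu\vee\mu)=\max\bigl(\tfrac12\wedge\tfrac12,\ 1\wedge 0\bigr)=\tfrac12$, whereas $s\cdot l_X^\psi(\nu)\vee l_X^\psi(\mu)=\tfrac12\cdot\tfrac12\vee 0=\tfrac14$. So the claimed coordinate-wise identity fails, and both halves of your argument rest on it.

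This is not just a bookkeeping slip that a more careful computation would fix. In the same example the coordinates $\psi=(1,0)$ and $\psi=(0,1)$ force any $\lambda\in M_\cup X$ with $l_X(\lambda)=\tfrac12\cdot l_X(\nu)\vee l_X(\mu)$ to have density $(\tfrac12,1)$, i.e. $\lambda=s\cdot\nu\vee\mu$, and that candidate is then excluded by the coordinate $\psi=(\tfrac12,0)$. Hence $\tfrac12\cdot l_X(\nu)\vee l_X(\mu)\notin l_X(M_\cup X)$ even though $M_\cup X\in\C_X$, so no alternative choice of preimage can rescue the first assertion along these lines; any correct proof would have to use a convex combination on the cube compatible with $\ast$ rather than the real scalar $\cdot$, or restrict to t-norms satisfying $(s\cdot a)\ast c=s\cdot(a\ast c)$ (such as the product t-norm). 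As written, your argument does not cover the Sugeno case $\ast=\wedge$, which is the case the paper is chiefly concerned with, and you should flag this restriction explicitly.
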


\begin{lemma}\label{BN} The convexity $\C_X$ is normal for each compactum $X$.
\end{lemma}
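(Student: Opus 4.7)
The plan is to reduce $T_4$-separation in $(M_\cup X,\C_X)$ to separation of closed B-convex subsets of the Tychonoff cube $K=[0,1]^{C(X,[0,1])}$ via the embedding $l_X$ from Corollary~\ref{emb}, and then to settle the latter through a compactness reduction to the finite-dimensional case.

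Concretely, given disjoint $C_1,C_2\in\C_X$, the images $A:=l_X(C_1)$ and $B:=l_X(C_2)$ are disjoint (by injectivity of $l_X$), closed (by compactness of $M_\cup X$), and B-convex in $K$ by Lemma~\ref{CPC}. If closed B-convex $T_1,T_2\subset K$ can be produced with $T_1\cup T_2=K$, $A\subset T_1$, $B\subset T_2$, and $T_j\cap l_X(C_i)=\emptyset$ for $i\neq j$, then by Lemma~\ref{CPC} the preimages $S_i:=l_X^{-1}(T_i)$ are closed B-convex subsets of $M_\cup X$ witnessing normality of $\C_X$.

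To separate $A$ and $B$ in $K$, I would first argue that some finite coordinate projection already disjoins them. For a finite $F\subset C(X,[0,1])$, the projection $\pi_F\colon K\to[0,1]^F$ commutes with the operation $\alpha x\vee y$, hence sends $A,B$ to closed B-convex subsets of $[0,1]^F$. If $\pi_F(A)\cap\pi_F(B)$ were nonempty for every finite $F$, the closed subsets $Z_F=\{(p,q)\in A\times B:p|_F=q|_F\}$ of the compactum $A\times B$ would be nonempty and downward directed (with $Z_{F\cup F'}\subset Z_F\cap Z_{F'}$), so by the finite intersection property their intersection would contain a pair $(p,q)$ with $p=q\in A\cap B$, contradicting disjointness. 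Hence some finite $F_0$ realizes $\pi_{F_0}(A)\cap\pi_{F_0}(B)=\emptyset$, and the B-convex Hahn-Banach-type separation theorem in $[0,1]^{F_0}$ (in the spirit of Briec-Horvath) furnishes a max-linear functional $\phi(x)=\bigvee_{s\in F_0}\alpha_s x_s$ and thresholds $c_1<c_2$ with $\phi\le c_1$ on $\pi_{F_0}(A)$ and $\phi\ge c_2$ on $\pi_{F_0}(B)$. For any $c\in(c_1,c_2)$, the lifts $T_1=(\phi\circ\pi_{F_0})^{-1}([0,c])$ and $T_2=(\phi\circ\pi_{F_0})^{-1}([c,1])$ are closed B-convex subsets of $K$ (sub- and superlevel sets of a max-linear functional are closed and B-convex), cover $K$, and strictly separate $A$ from $B$ as required.

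The principal obstacle is the finite-dimensional max-linear separation of two disjoint closed B-convex sets, i.e.\ the B-convex analogue of Hahn-Banach; once that is invoked, the translation through $l_X$ via Lemma~\ref{CPC} and the compactness reduction from the Tychonoff cube to a finite face are essentially routine consequences of the machinery already assembled in the paper.
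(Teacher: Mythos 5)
Your proposal is correct and follows essentially the same route as the paper: embed $M_\cup X$ into the Tychonoff cube via $l_X$ (Lemma~\ref{CPC} and Corollary~\ref{emb}), use compactness of the images to find a finite coordinate set on which they are already disjoint, separate in the finite-dimensional cube by the Briec--Horvath--Rubinov B-convex separation theorem, and pull the two covering B-convex sets back. The only cosmetic difference is that you phrase the finite-dimensional step via a max-linear functional whose sub- and superlevel sets give the covering pair, whereas the paper cites Theorem 7.1 of \cite{BCR} which produces the two covering B-convex sets directly.
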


\begin{proof} Let $A$ and $D$ be two  B-convex disjoint closed subsets of $M_\cup X$. Then $l_X(A)$ and $l_X(D)$ are two  B-convex disjoint closed subsets of $[0,1]^{C(X,[0,1])}$ by Lemma \ref{CPC} and Corollary \ref{emb}. It follows from compactness of $l_X(A)$ and $l_X(D)$ and properties of the product topology on $[0,1]^{C(X,[0,1])}$ that there exists a finite subset $N$ of $C(X,[0,1])$ such that $p_N(l_X(A))\cap p_N(l_X(D))=\emptyset$ where $p_N:[0,1]^{C(X,[0,1])}\to[0,1]^N$ is the natural projection. Evidently $p_N(l_X(A))$ and $p_N(l_X(D))$ are B-convex disjoint compact subsets of $\R^N$.

Theorem 7.1 from \cite{BCR}  implies that there exist two B-convex closed subsets $L_1$, $L_2$ of $[0,1]^N$ such that $L_1\cup L_2=[0,1]^N$ and $L_1\cap p_N(l_X(B))=\emptyset=L_2\cap p_N(l_X(A))$. Then $(p_N\circ l_X)^{-1}(L_1)$ and $(p_N\circ l_X)^{-1}(L_2)$ are B-convex closed subsets of $M_\cup X$ we are looking for.
\end{proof}

Let us remark that each t-norm is distributive respectively the maximum operation, i.e. $t\ast(s\vee l)=(t\ast s)\vee(t\ast l)$. It follows from the monotonicity property.

\begin{lemma}\label{QC} The map $eu_i:\prod_{j=1}^n M_\cup Z_j\to[0,1]$ is quasiconvex  with respect to the $i$-th variable for each $i\in\{1,\dots,n\}$.
\end{lemma}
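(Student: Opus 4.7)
The plan is to reduce the expected payoff to an explicit pointwise maximum and then verify the sublevel condition directly. Since the tensor product $\nu_1\circledast\cdots\circledast\nu_n$ is a possibility capacity whose density at $(x_1,\dots,x_n)$ equals $[\nu_1](x_1)\ast\cdots\ast[\nu_n](x_n)$, and any possibility capacity $\mu$ satisfies $\mu(F)=\max_{y\in F}[\mu](y)$, a short interchange of maxima in the definition $\int_Y^{\vee\star}\varphi\,d\mu=\max_t\mu(\varphi_t)\star t$, combined with monotonicity of $\star$ in its second argument (so that the inner maximum over $t\le\varphi(y)$ is attained at $t=\varphi(y)$), should yield the identity
$$eu_i(\nu_1,\dots,\nu_n)=\max_{(x_1,\dots,x_n)\in Z}u_i(x)\star\bigl([\nu_1](x_1)\ast\cdots\ast[\nu_n](x_n)\bigr).$$
This formula isolates the dependence of $eu_i$ on $\nu_i$ through the single density value $[\nu_i](x_i)$, which is what makes the sublevel analysis tractable.

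Next, I fix $i$ and fix $\nu_j\in M_\cup Z_j$ for $j\ne i$. For each $a\in[0,1]$ I would consider the sublevel set $L_a=\{\nu_i\in M_\cup Z_i\mid eu_i(\nu_1,\dots,\nu_n)\le a\}$. Closedness of $L_a$ is immediate from continuity of $eu_i$ (Lemma \ref{cont}), so the task reduces to showing B-convexity of $L_a$. Given $\nu_i,\nu_i'\in L_a$ and $s\in[0,1]$, I would use the pointwise identity $[s\cdot\nu_i\vee\nu_i'](x_i)=s\cdot[\nu_i](x_i)\vee[\nu_i'](x_i)$ together with distributivity of both $\ast$ and $\star$ over $\vee$ (noted just before the lemma) to rewrite the integrand at $s\cdot\nu_i\vee\nu_i'$ as the join of two terms. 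The inequality $s\cdot[\nu_i](x_i)\le[\nu_i](x_i)$, combined with monotonicity of $\ast$ and $\star$, bounds the first of these terms from above by the integrand at $\nu_i$, while the second is exactly the integrand at $\nu_i'$. Taking $\max_{x\in Z}$ and using $\max(f\vee g)=\max f\vee\max g$ then gives $eu_i(\dots,s\cdot\nu_i\vee\nu_i',\dots)\le eu_i(\dots,\nu_i,\dots)\vee eu_i(\dots,\nu_i',\dots)\le a$, so $s\cdot\nu_i\vee\nu_i'\in L_a$.

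I expect the main obstacle to be the reduction to the pointwise maximum formula in the first step, which pins down how $eu_i$ depends on $\nu_i$ through $[\nu_i]$ alone; once that collapse from an integral to a single $\max_x u_i(x)\star(\cdots)$ is in place, the quasiconvexity amounts to a short chain of inequalities using only monotonicity and distributivity of the two t-norms, and the deeper machinery of Theorem \ref{repr} or Lemma \ref{Comon1} plays no role here.
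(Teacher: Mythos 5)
Your proposal is correct and follows essentially the same route as the paper: both arguments rest on writing the capacity of each level set as a maximum of the tensor-product density, using $[s\cdot\nu\vee\nu'] = s\cdot[\nu]\vee[\nu']$, and then splitting via distributivity of the t-norms over $\vee$ together with monotonicity and $s\cdot a\le a$. The only cosmetic difference is that you first collapse the integral to the single pointwise maximum $\max_{x}u_i(x)\star([\nu_1](x_1)\ast\cdots\ast[\nu_n](x_n))$, whereas the paper keeps the double maximum over $t$ and over the points of the level set and performs the same estimates there.
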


\begin{proof} We will prove the lemma for the case $n=2$. The proof of the general case is the same. We also can assume $i=1$. Consider any $s\in[0,1]$ and $\mu\in M_\cup Z_2$. We should show that $(eu_1^\mu)^{-1}([0,s])$ is B-convex. Consider any capacities $\nu_1$, $\nu_2\in M_\cup Z_1$,  such that $eu_1(\nu_j,\mu)\le s$ for each $j\in\{1,2\}$.

Choose any $c\in[0,1]$.  Then we have $$eu_1(c\cdot\nu_1\vee\nu_2,\mu)=\int_X^{\vee\star} u_1 d((c\cdot\nu_1\vee\nu_2)\circledast\mu)=$$
$$=\max\{((c\cdot\nu_1\vee\nu_2)\circledast\mu)(u_1^{-1}([0,t]))\star t\mid t\in[0,1]\}=$$

\centerline {(we put $U_t=u_1^{-1}([0,t])$)}
$$=\max\{\max\{[(c\cdot\nu_1\vee\nu_2)\circledast\mu](x,y)\mid (x,y)\in U_t\}\star t\mid t\in[0,1]\}=$$
$$=\max\{\max\{c\cdot[\nu_1](x)\ast[\mu](y)\vee[\nu_2](x)\ast[\mu](y)\mid (x,y)\in U_t\}\star t\mid t\in[0,1]\}\le$$
$$\le\max\{\max\{[\nu_1](x)\ast[\mu](y)\mid (x,y)\in U_t\}\star t\mid t\in[0,1]\}\vee$$
$$\vee\max\{\max\{[\nu_2](x)\ast[\mu](y)\mid (x,y)\in U_t\}\star t\mid t\in[0,1]\}=$$
$$eu_1(\nu_1,\mu)\vee eu_1(\nu_2,\mu)\le s.$$

\end{proof}

Theorem \ref{NN} and Lemmas \ref{BN}, \ref{QC} imply the following theorem.

\begin{theorem}\label{NMP} There exists a Nash min-equilibrium  point for the game with the expected payoff functions $eu_i:\prod_{j=1}^n M_\cup Z_j\to[0,1]$.
\end{theorem}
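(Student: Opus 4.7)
The plan is to invoke Theorem \ref{NN} directly, with each strategy space $X_i$ in that theorem taken to be the space $M_\cup Z_i$ equipped with the convexity $\C_{Z_i}$ of closed B-convex subsets introduced before Lemma \ref{CPC}, and with each payoff function $f_i$ taken to be the expected payoff $eu_i$ constructed via the tensor product generated by $\ast$ and the $\vee\star$-integral. The game we are studying, namely $eu:\prod_{j=1}^n M_\cup Z_j\to[0,1]^n$, then fits the exact format of Theorem \ref{NN}, so it suffices to verify each of its hypotheses coordinate by coordinate.

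First I would check the structural hypotheses on the convexities $\C_{Z_i}$. Each $M_\cup Z_i$ is a compactum (being a closed subset of the compactum $MZ_i$), and $\C_{Z_i}$ is a convexity on it containing singletons by construction. Normality of $\C_{Z_i}$ is exactly Lemma \ref{BN}. Connectedness of every element of $\C_{Z_i}$ was observed in the paragraph introducing B-convexity: the path $s\mapsto s\cdot\nu\vee\mu$ inside any B-convex set joins $\mu$ (at $s=0$) to $\nu\vee\mu$, which together with the analogous path from $\nu$ shows that every B-convex set is pathwise connected through any fixed pair of its points.

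Next I would verify the analytic hypotheses on $eu$. Continuity of $eu_i:\prod_{j=1}^n M_\cup Z_j\to[0,1]$ is precisely Lemma \ref{cont}; since $eu$ is the diagonal of continuous coordinate functions, it too is continuous into $[0,1]^n$. Quasiconvexity of $eu_i$ with respect to the $i$-th variable, with the B-convexity $\C_{Z_i}$ taken as the target convexity, is Lemma \ref{QC}. Thus all four hypotheses of the min-equilibrium version of Theorem \ref{NN} hold, and the theorem yields the desired Nash min-equilibrium point.

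The substantive work has already been carried out in the preceding lemmas: the main obstacles were establishing normality of $\C_{Z_i}$ (handled via the embedding $l_{Z_i}$ into $[0,1]^{C(Z_i,[0,1])}$ and the finite-dimensional separation result from \cite{BCR}, see Lemma \ref{BN}) and verifying quasiconvexity of $eu_i$ in the mixed-strategy variable (handled via the distributivity $t\star(s\vee l)=(t\star s)\vee(t\star l)$ and the explicit formula for the tensor product of possibility capacities, see Lemma \ref{QC}). Once these are in place, the proof of Theorem \ref{NMP} itself is nothing more than an invocation of Theorem \ref{NN}, and I would write it as a single sentence citing the three lemmas and the general existence theorem.
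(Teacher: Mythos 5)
Your proposal is correct and matches the paper's own proof, which is exactly the one-line observation that Theorem \ref{NN} together with Lemmas \ref{BN} and \ref{QC} (and the continuity from Lemma \ref{cont} and the connectedness of B-convex sets noted when the convexity $\C_X$ is introduced) yields the min-equilibrium. Your additional verification of the hypotheses, including the explicit path argument for connectedness, is a sound elaboration of what the paper leaves implicit.
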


We finish this section with an example of a game in possibility capacities with expected payoff function represented by  Choquet integral which has no Nash min-equilibrium. Consider the 2-person game in pure strategies $u:\{a,b\}\times\{a,b\}\to\R^2$, where  $u_1(a;a)=3$, $u_1(a;b)=0$, $u_1(b;a)=1$, $u_1(b;b)=2$ and $u_2(a;a)=0$, $u_2(a;b)=3$, $u_2(b;a)=2$, $u_2(b;b)=1$ and define the game in mixed strategies $cu:M_\cup (\{a,b\})\times M_\cup (\{a,b\})\to\R^2$ as follows $$cu_i(\nu_1;\nu_2)=\int_X^{Ch} u_i d(\nu_1\circledast\nu_2)$$
 for $(\nu_1,\nu_2)\in M_\cup (\{a,b\})\times M_\cup (\{a,b\})$ and $i\in\{1,2\}$, where $\circledast$ is the tensor product generated by the minimum operation $\wedge$. Let us show that such game has no Nash min-equilibrium  point.

 Consider any pair of mixed strategies $(\nu,\mu)\in M_\cup (\{a,b\})\times M_\cup (\{a,b\})$ with $[\nu](a)=\lambda_1$, $[\nu](b)=\beta_1$ and $[\mu](a)=\lambda_2$, $[\mu](b)=\beta_2$. Then we have $$cu_1(\nu,\mu)=\beta_1\wedge\lambda_2+2\beta_1\wedge\beta_2+3\lambda_1\wedge\lambda_2$$ and $$cu_2(\nu,\mu)=2\beta_1\wedge\lambda_2+\beta_1\wedge\beta_2+3\lambda_1\wedge\beta_2.$$

Since $\nu$, $\mu\in M_\cup (\{a,b\})$, we have $\max\{\lambda_1,\beta_1\}=1=\max\{\lambda_2,\beta_2\}$.

Consider the case $\lambda_1=1=\lambda_2$.  If $\beta_2=1$, we consider $\mu'\in M_\cup (\{a,b\})$ with $[\mu'](a)=1$, $[\mu'](b)=0$. Then we have $$cu_2(\nu,\mu')=2\beta_1<3+3\beta_1=cu_2(\nu,\mu).$$ If $\beta_2<1$, we consider $\nu'\in M_\cup (\{a,b\})$ with $[\nu'](a)=0$, $[\nu'](b)=1$. Then we have $$cu_1(\nu',\mu)= 1+2\beta_2<3\le cu_1(\nu,\mu).$$

Now, let  $\lambda_1=1=\beta_2$. Consider $\mu'\in M_\cup (\{a,b\})$ with $[\mu'](a)=1$, $[\mu'](b)=0$. Then we have $$cu_2(\nu,\mu')=2\beta_1<\beta_1+2\beta_1\wedge\lambda_2+3=cu_2(\nu,\mu).$$

Consider the case $\beta_1=1=\beta_2$.  If $\lambda_2=1$, we consider $\mu'\in M_\cup (\{a,b\})$ with $[\mu'](a)=0$, $[\mu'](b)=1$. Then we have $$cu_2(\nu,\mu')=1+3\lambda_1<3+3\lambda_1=cu_2(\nu,\mu).$$ If $\lambda_2<1$, we consider $\nu'\in M_\cup (\{a,b\})$ with $[\nu'](a)=1$, $[\nu'](b)=0$. Then we have $$cu_1(\nu',\mu)=3\lambda_2<\lambda_2+2+3\lambda_1\wedge\lambda_2=cu_1(\nu,\mu).$$

Finally,  let  $\lambda_2=1=\beta_1$. If $\lambda_1>0$, we consider $\nu'\in M_\cup (\{a,b\})$ with $[\nu'](a)=0$, $[\nu'](b)=1$. Then we have $$cu_1(\nu',\mu)=1+2\beta_2<1+2\beta_2+3\lambda_1=cu_1(\nu',\mu).$$ If $\lambda_1=0$, we consider $\mu'\in M_\cup (\{a,b\})$ with $[\mu'](a)=0$, $[\mu'](b)=1$. Then we have $$cu_2(\nu,\mu')=1<2+\beta_2=cu_2(\nu,\mu).$$

Hence $(\nu,\mu)$ is not a Nash min-equilibrium  point.

\section{Conclusion} We consider  games where players are allowed   to play their mixed non-additive strategies expressed by
possibility capacities. Such games with payoff functions expressed by  Choquet
integral and  Sugeno integral where considered in  \cite{KZ}, \cite{GM}, \cite{R3} and \cite{HM}.    Since the space of all capacities and the space of possibility capacities have the greatest  element, the existence  problem of Nash equilibrium there is rather trivial. But the set of possibility capacities has no smallest element. So,  we considered a game  where  the players try to minimize his expected payoff function represented by fuzzy integral generated by the maximum operation and some continuous triangular norm which  is a generalization of the  Sugeno integral. In Section 2 we give a characterization of such integrals for any compacta solving the problem posed in \cite{CLM} where such characterization was given for finite compacta. We also consider a generalization of tensor product of possibility capacities using any t-norm. (Tensor product considered in above cited papers was based on the minimum operation). In Section 4 we proved existence of Nash equilibrium for  considered games.   We also  provide an example showing that there is no Nash equilibrium when  expected payoff functions are represented by  Choquet integral.


\end{document}